\documentclass[11pt, oneside, a4paper]{article}

\usepackage[a4paper,margin=1.4in]{geometry}
\usepackage[utf8]{inputenc}
\usepackage[T1]{fontenc}
\usepackage{amsmath}
\usepackage{amssymb}
\usepackage{amsthm,enumerate} 
\usepackage{listings}
\usepackage{empheq}
\usepackage{url}
\usepackage{hyperref}
\usepackage{cleveref}
\usepackage{verbatim}
\usepackage{dsfont}
\usepackage{xcolor}
\usepackage{booktabs}
\usepackage{aliascnt}
\usepackage{marvosym}

\newaliascnt{lemma}{theorem}
\newtheorem{lemma}[lemma]{Lemma}
\aliascntresetthe{lemma}
\crefname{lemma}{lemma}{lemmas}
\Crefname{lemma}{Lemma}{Lemmas}

\newaliascnt{conjecture}{theorem}

\aliascntresetthe{conjecture}
\crefname{conjecture}{conjecture}{conjectures}
\Crefname{conjecture}{Conjecture}{Conjectures}

\newaliascnt{example}{theorem}

\aliascntresetthe{example}
\crefname{example}{example}{examples}
\Crefname{example}{Example}{Examples}

\newaliascnt{cor}{theorem}
\newtheorem{cor}[cor]{Corollary}
\aliascntresetthe{cor}
\crefname{cor}{corollary}{corollaries}
\Crefname{cor}{Corollary}{Corollaries}

\newaliascnt{prop}{theorem}

\aliascntresetthe{prop}
\crefname{prop}{proposition}{propositions}
\Crefname{prop}{Proposition}{Propositions}

\newaliascnt{notation}{theorem}

\aliascntresetthe{notation}
\crefname{notation}{notation}{notations}
\Crefname{notation}{Notation}{Notations}

\newaliascnt{thm}{theorem}
\newtheorem{thm}[thm]{Theorem}
\aliascntresetthe{thm}
\crefname{thm}{theorem}{theorems}
\Crefname{thm}{Theorem}{Theorems}

\newaliascnt{remark}{theorem}

\aliascntresetthe{remark}
\crefname{remark}{remark}{remarks}
\Crefname{remark}{Remark}{Remarks}

\newaliascnt{result}{theorem}

\aliascntresetthe{result}
\crefname{result}{result}{results}
\Crefname{result}{Result}{Results}

\newaliascnt{obs}{theorem}

\aliascntresetthe{obs}
\crefname{obs}{observation}{observations}
\Crefname{obs}{Observation}{Observations}

\newaliascnt{problem}{theorem}

\aliascntresetthe{problem}
\crefname{problem}{problem}{problems}
\Crefname{problem}{Problem}{Problems}

\newaliascnt{de}{theorem}

\aliascntresetthe{de}
\crefname{de}{definition}{definitions}
\Crefname{de}{Definition}{Definitions}

\newcommand{\EKR}{\mathcal{F}}

\newcommand{\F}{\mathbb{F}}

\DeclareMathOperator{\PG}{PG}

\usepackage{cancel}

\title{A note on the chromatic number of Kneser graphs on chambers of projective planes and incidence-free sets}
\author{
Philipp Heering\footnote{Mathematisches Institut, Justus-Liebig Universität Gießen, Arndtstraße 2,
 D-35392 Gießen, Germany,
 (email: \href{mailto:philipp.heering@math.uni-giessen.de}{philipp.heering@math.uni-giessen.de})}, \
Klaus Metsch\footnote{Mathematisches Institut, Justus-Liebig Universität Gießen, Arndtstraße 2,
 D-35392 Gießen, Germany,
 (email: \href{mailto:klaus.metsch@math.uni-giessen.de}{klaus.metsch@math.uni-giessen.de})}, \
 Vladislav Taranchuk\footnote{Ghent University,
Department of Mathematics: Analysis, Logic and Discrete Mathematics, Krijgslaan 281,
B-9000 Gent, Belgium,
(email:  \href{mailto:vlad.taranchuk@ugent.be}{vlad.taranchuk@ugent.be})}, \ 
Zsuzsa Weiner\footnote{Department of Computer Science, E\"otv\"os Lor\'and University, P\'azm\'any P\'eter S\'et\'any 1/C, H-1117 Budapest, Hungary, (email: \href{mailto:zsuzsa.weiner@gmail.com}{zsuzsa.weiner@gmail.com})}
}

\date{2026}

\begin{document}

\maketitle

 \begin{center}
\subsection*{Abstract}
\end{center}

 Let $D=(\mathcal{P},\mathcal{B})$ be a symmetric $(v,k,\lambda)$-design and let $(X,Y)$ be an equinumerous incidence-free pair, with $X\subseteq \mathcal{P}$ and $Y\subseteq \mathcal{B}$. In this note, we give an elementary proof which shows the existence of a perfect matching between $\mathcal{P} \setminus X$ and $\mathcal{B}\setminus Y$ in the incidence graph of $D$. This recovers a result of Spiro, Adriaensen and Mattheus, who already showed this using different arguments for $k\geq 36$. We use this to connect some dots in the literature and prove that finding the chromatic number of the Kneser graph on chambers of a projective plane is equivalent to finding the incidence-free number of the incidence graph of the plane. Furthermore, we construct an incidence-free pair for PG$(2,q^2)$ of size roughly $q^3/2+3q^2/4$.

\section{Introduction}

The Kneser graph on flags of a projective space has the flags as vertices, with two vertices adjacent when the corresponding flags are opposite. Determining its maximal cocliques is therefore an Erd\H{o}s-Ko-Rado-type problem. For several choices of dimension and flag type, the largest cocliques are known, and geometric proofs often yield stability results. In some cases, these stability results can be leveraged to determine the chromatic number, see \cite{CN_of_some, DHAESELEER2022103474, heeringmetsch2023secondmax}.

All projective planes considered in this note are Desarguesian, even though we remark that for non-Desarguesian projective planes the following definition also works.
 We write $\Gamma_q$ for the graph whose vertices are the chambers (or point-line flags) $(P,\ell)$ of a projective plane of order $q$. Two vertices $(P_1,\ell_1)$ and $(P_2,\ell_2)$ are adjacent if and only if $P_1\notin \ell_2$ and $P_2\notin \ell_1$. 
 It was already observed in \cite{Mussche_PhD} that every non-empty, maximal coclique $\EKR$ of $\Gamma_q$ is one of the following:
 \begin{itemize}
     \item There is a chamber $(P,\ell)$ in the plane, such that all chambers of $\EKR$ have $\ell$ as their line, or $P$ as their point. Such a coclique $\EKR$ has size $2q+1$ and is called a chamber coclique.
     \item We have $|\EKR|=3$ and the points of the chambers are not collinear. Such a coclique is called a triangular coclique.
 \end{itemize}
 Thus, a stability result for the problem of maximal cocliques is available, but determining the chromatic number $\chi(\Gamma_q)$ remains non-trivial. It was shown in \cite{Mussche_PhD} that
\[
(q+1)(q+1-\sqrt{q})\leq \chi(\Gamma_q) \leq q^2+1,
\]
with equality on the left when a maximal arc of order $\sqrt{q}$ exists. Such arcs are known to exist in Desarguesian projective planes if $q$ is a power of four.

The point-line incidence structure of a projective plane is a symmetric $(q^2+q+1,q+1,1)$-design. We use the language of designs to state the main result of this note. Let $D=(\mathcal{P},\mathcal{B})$ be a symmetric $(v,k,\lambda)$-design. 
That means that $\mathcal{P}$ is a set of $v$ points and $\mathcal{B}$ is a set of $v$ blocks, each with $k$ points, such that any $2$ points occur in exactly $\lambda$ blocks.
Then, every point is on $k$ blocks and any two blocks share exactly $\lambda$ points.
If $X\subseteq \mathcal{P}$ and $Y\subseteq \mathcal{B}$ such that no block of $Y$ contains a point of $X$ and $|X|=|Y|$, then $(X,Y)$ is called an equinumerous incidence-free pair.

\begin{thm} \label{T: improve sam}
    Let $D=(\mathcal{P},\mathcal{B})$ be a symmetric $(v,k,\lambda)$-design. If $(X,Y)$ is an equinumerous incidence-free pair, with $X\subseteq \mathcal{P}$ and $Y\subseteq \mathcal{B}$, then there is a perfect matching between $\mathcal{P}\setminus X$ and $\mathcal{B}\setminus Y$ in the incidence graph of $D$.
\end{thm}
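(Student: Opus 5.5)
The plan is to verify Hall's condition in the bipartite subgraph $H$ of the incidence graph induced on $(\mathcal{P}\setminus X)\cup(\mathcal{B}\setminus Y)$, in its König form: $H$ has a perfect matching if and only if it has no vertex cover of size less than $v-n$, where $n=|X|=|Y|$. Complementing a vertex cover gives the equivalent statement I will actually prove. Suppose $A\subseteq\mathcal{P}\setminus X$ and $B\subseteq\mathcal{B}\setminus Y$ are such that no block of $B$ contains a point of $A$. Then
\[
|A|+|B|\le v-n .
\]
So I must show that an incidence-free pair living entirely in the complement of a given equinumerous incidence-free pair $(X,Y)$ is "small" in this additive sense.

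The only information available is the design identities $NN^T=N^TN=(k-\lambda)I+\lambda J$ for the incidence matrix $N$, together with the two vanishing incidence counts $\mathbf 1_X^TN\mathbf 1_Y=0$ and $\mathbf 1_A^TN\mathbf 1_B=0$. I would consider the test vectors $f=\mathbf 1_A+\alpha\mathbf 1_X$ on points and $g=\mathbf 1_B+\beta\mathbf 1_Y$ on blocks. Then $f^TNg$ involves only the cross terms $\mathbf 1_A^TN\mathbf 1_Y$ and $\mathbf 1_X^TN\mathbf 1_B$. These cross terms are controlled by double counting:
\begin{itemize}
\item every block of $Y$ has all its $k$ points in $\mathcal{P}\setminus X$, so $\mathbf 1_{\mathcal{P}\setminus X}^TN\mathbf 1_Y=nk$;
\item every point of $X$ has all its $k$ blocks in $\mathcal{B}\setminus Y$, so $\mathbf 1_X^TN\mathbf 1_{\mathcal{B}\setminus Y}=nk$.
\end{itemize}
The complementary pieces $(\mathcal{P}\setminus X)\setminus A$ and $(\mathcal{B}\setminus Y)\setminus B$ have sizes $v-n-|A|$ and $v-n-|B|$. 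Their incidences with $Y$ and $X$ are at most $k$ per element, which yields lower bounds for $\mathbf 1_A^TN\mathbf 1_Y$ and $\mathbf 1_X^TN\mathbf 1_B$ that grow with $|A|+|B|-(v-n)$. On the other side I would use the spectral bound
\[
f^TNg\le \frac{k}{v}(\mathbf 1^Tf)(\mathbf 1^Tg)+\sqrt{k-\lambda}\,\|f^\perp\|\,\|g^\perp\|
\]
(expander mixing), where $f^\perp,g^\perp$ are the projections orthogonal to $\mathbf 1$. I then choose $\alpha,\beta$ (presumably negative, say $\alpha=\beta=-1$, or tuned by $|A|,|B|$) so that the main $k/v$ term cancels. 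Comparing the two sides should then force $|A|+|B|\le v-n$.

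A cleaner alternative I would try first is purely combinatorial. Count incidences between $X\cup A$ and $Y\cup B$. In this count, $X$ meets only $B$, and $A$ meets only $Y$. Using the symmetric-design fact that two blocks meet in exactly $\lambda$ points, I would bound the number of incidences between $A$ and $Y$ from both sides. Summing over pairs of blocks in $Y$ the quantity $\lambda$ gives $\sum_{p\in\mathcal{P}\setminus X} d_Y(p)^2=nk+n(n-1)\lambda$. A Cauchy--Schwarz comparison of this with the count restricted to $A$ versus its complement should give the inequality.

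The main obstacle is exactly this final comparison. The naive spectral bound for a single incidence-free pair gives only a multiplicative constraint, and it is not sharp enough to produce the clean additive bound $|A|+|B|\le v-n$ for all $k$. This is presumably why the earlier proof needed $k\ge 36$. The key point must be to exploit both pairs simultaneously, so that the losses in the two Cauchy--Schwarz or eigenvalue estimates cancel exactly. I expect equality in the design identities to make the computation close up. If it does not, I would fall back to proving Hall's condition $|N_H(S)|\ge |S|$ directly for $S\subseteq\mathcal{P}\setminus X$. That argument would take a minimal violating $S$ and use the $\lambda$-intersection property to show that $X\cup S$ together with $Y$ leads to a contradiction.
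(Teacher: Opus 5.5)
Your König reduction is correct: $H$ has a perfect matching if and only if every incidence-free pair $(A,B)$ with $A\subseteq\mathcal{P}\setminus X$, $B\subseteq\mathcal{B}\setminus Y$ satisfies $|A|+|B|\le v-n$. The gap is that the proposal never proves this inequality, and the tools you list cannot be expected to. Tuning $\alpha,\beta$ only kills the main term $\frac{k}{v}(\mathbf 1^Tf)(\mathbf 1^Tg)$. The error term $\sqrt{k-\lambda}\,\|f^\perp\|\,\|g^\perp\|$ survives and nothing in your plan controls it, yet the target is an exact additive inequality that is attained, e.g.\ by $A=\mathcal{P}\setminus X$, $B=\emptyset$. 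Your cross-term estimates do not help. Using ``at most $k$ incidences per element'' gives only $\mathbf 1_A^TN\mathbf 1_Y\ge k(|A|-(v-2n))$ and $\mathbf 1_X^TN\mathbf 1_B\ge k(|B|-(v-2n))$, which are negative, hence vacuous, unless $|A|$ or $|B|$ already exceeds $v-2n$. The Cauchy--Schwarz comparison with $\sum_p d_Y(p)^2=nk+n(n-1)\lambda$ is lossy in the same way. So the decisive step is replaced by the hope that the computation closes up, and the minimal-counterexample fallback is not carried out.

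The missing idea is to use the $\lambda$-property exactly and locally, through matchings rather than averaging. Fix a bijection $\Phi:X\to Y$. For $x\in X$, take the incidence graph between the $k$ blocks through $x$ and the $k$ points of $\Phi(x)$. It is $\lambda$-regular: a point of $\Phi(x)$ lies with $x$ on $\lambda$ blocks, and a block through $x$ differs from $\Phi(x)$ and meets it in $\lambda$ points. It lies inside $H$ because $(X,Y)$ is incidence-free, so it has a perfect matching $M_x$. In your formulation: for $p\in A\cap\Phi(x)$, the block $M_x(p)$ contains $p$ and $x$, so it lies in $(\mathcal{B}\setminus Y)\setminus B$, and the map $(p,x)\mapsto(M_x(p),x)$ is injective. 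Hence
\[
k|A|=\mathbf 1_A^TN\mathbf 1_{(\mathcal{B}\setminus Y)\setminus B}+\mathbf 1_A^TN\mathbf 1_Y\le\sum_{C\in(\mathcal{B}\setminus Y)\setminus B}\bigl(|C\setminus X|+|C\cap X|\bigr)=k\,(v-n-|B|),
\]
which is exactly your inequality. The paper packages the same exchange as the fractional perfect matching $w(e)=(1+|\{x\in X: e\in M_x\}|)/k$ on $H$ and then deduces Hall's condition from it.
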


This generalizes a result due to S.A.M. [Proposition 5.1 in \cite{Sam}], 
where the same result was proven under the extra condition $k\geq 36$.
 Using Theorem \ref{T: improve sam} instead of Proposition 5.1 in \cite{Sam} also gives a generalization of their main result [Theorem 1.3 in \cite{Sam}] as we have no condition on $k$.

\begin{cor}
     Let $D=(\mathcal{P},\mathcal{B})$ be a symmetric $(v,k,\lambda)$-design. Then the edge domination number of this design is $v-\overline{\alpha}$, where $\overline{\alpha}$ is the maximum size of an equinumerous incidence-free pair $(X,Y)$.
\end{cor}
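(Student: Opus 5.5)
The plan is to prove the two inequalities $\gamma'\ge v-\overline{\alpha}$ and $\gamma'\le v-\overline{\alpha}$ separately, where $\gamma'$ denotes the edge domination number of the incidence graph of $D$. Recall that $\gamma'$ is the minimum size of a set $E$ of incidences (edges of the incidence graph) such that every edge shares an endpoint with some edge of $E$. The lower bound is a direct counting argument. The upper bound uses Theorem~\ref{T: improve sam}, applied to an equinumerous incidence-free pair of maximum size.

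\textbf{Lower bound.} Let $E$ be an edge dominating set with $|E|=d$.
\begin{enumerate}
\item Each edge of $E$ covers exactly one point and one block. Hence at most $d$ points and at most $d$ blocks are endpoints of edges of $E$.
\item Let $X'$ be the set of points and $Y'$ the set of blocks that are not endpoints of any edge of $E$. Then $|X'|\ge v-d$ and $|Y'|\ge v-d$.
\item No block of $Y'$ contains a point of $X'$. Indeed, such an incidence would be an edge sharing no endpoint with any edge of $E$, contradicting that $E$ is edge dominating.
\item Choose subsets $X\subseteq X'$ and $Y\subseteq Y'$ with $|X|=|Y|=v-d$. Then $(X,Y)$ is an equinumerous incidence-free pair, so $\overline{\alpha}\ge v-d$.
\end{enumerate}
Hence $d\ge v-\overline{\alpha}$ for every edge dominating set, i.e.\ $\gamma'\ge v-\overline{\alpha}$. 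If $d>v$ the inequality holds trivially.

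\textbf{Upper bound.} Let $(X,Y)$ be an equinumerous incidence-free pair with $|X|=|Y|=\overline{\alpha}$.
\begin{enumerate}
\item By Theorem~\ref{T: improve sam}, there is a perfect matching $M$ between $\mathcal{P}\setminus X$ and $\mathcal{B}\setminus Y$ in the incidence graph. It has exactly $v-\overline{\alpha}$ edges.
\item Take any incidence $(P,B)$. If $P\notin X$, then $P$ is covered by $M$. If $B\notin Y$, then $B$ is covered by $M$. Otherwise $P\in X$ and $B\in Y$, which is impossible because the pair is incidence-free.
\item So every edge shares an endpoint with an edge of $M$, and $M$ is an edge dominating set. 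This gives $\gamma'\le v-\overline{\alpha}$.
\end{enumerate}

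The only nontrivial input is the existence of the perfect matching in the upper bound, which is exactly Theorem~\ref{T: improve sam}; the remaining steps are bookkeeping. The one point to check carefully is the degenerate case $\overline{\alpha}=0$. There Theorem~\ref{T: improve sam} reduces to the existence of a perfect matching in the incidence graph of $D$, which holds by K\H{o}nig's theorem since that graph is $k$-regular and bipartite. The same argument also covers $k=0$, i.e.\ the trivial design in which the incidence graph has no edges.
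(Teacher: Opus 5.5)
Your proof is correct and matches the argument the paper relies on: the paper gets the corollary by rerunning Theorem~1.3 of Spiro, Adriaensen and Mattheus with Theorem~\ref{T: improve sam} in place of their Proposition~5.1, and the same two-directional bookkeeping appears explicitly in the proof of Theorem~\ref{T: connect the dots} (the uncovered points and blocks of an edge dominating set form an incidence-free pair, and the perfect matching on the complement of a maximum pair is edge dominating). Your extra remark on the case $\overline{\alpha}=0$ is unnecessary, since Theorem~\ref{T: improve sam} already applies with $X=Y=\emptyset$.
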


Our proof of Theorem \ref{T: improve sam} is quite elementary.
Furthermore, we can use it to prove the following.

\begin{thm} \label{T: connect the dots}
Finding a $t$-coloring of  the Kneser graph on chambers of a projective plane of order $q$ that uses only chamber cocliques is equivalent to finding an equinumerous, incidence-free pair $(X,Y)$ of points and lines of size $q^2+q+1-t$.
\end{thm}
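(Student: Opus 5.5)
We prove both directions, using Theorem \ref{T: improve sam} for the harder one. Throughout, a chamber coclique is determined by a chamber $(P,\ell)$: it is the set of chambers whose point is $P$ or whose line is $\ell$. We call $P$ and $\ell$ its \emph{center point} and \emph{center line}. Since colour classes may be shrunk, a $t$-coloring using only chamber cocliques means a cover of all $(q^2+q+1)(q+1)$ chambers by $t$ chamber cocliques.

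\emph{From pairs to colorings.} Let $(X,Y)$ be an equinumerous incidence-free pair of size $q^2+q+1-t$. By Theorem \ref{T: improve sam} there is a perfect matching $\sigma$ between $\mathcal{P}\setminus X$ and $\mathcal{B}\setminus Y$ in the incidence graph. Thus for each $P\notin X$ we have a line $\sigma(P)\notin Y$ with $P\in\sigma(P)$, giving $t$ chambers $(P,\sigma(P))$. Take the corresponding $t$ chamber cocliques. Let $(Q,m)$ be any chamber. If $Q\notin X$, it lies in the coclique of $(Q,\sigma(Q))$. Otherwise $Q\in X$, and then $m\notin Y$ because the pair is incidence-free and $Q\in m$. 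So $m=\sigma(P)$ for some $P$, and $(Q,m)$ lies in the coclique of $(P,m)$. Hence we have $t$ chamber cocliques covering all chambers, i.e.\ a $t$-coloring.

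\emph{From colorings to pairs.} Suppose $t$ chamber cocliques with centers $(P_i,\ell_i)$, $i=1,\dots,t$, cover all chambers. Let $X_0$ be the set of points that are not among the $P_i$, and $Y_0$ the set of lines that are not among the $\ell_i$. A chamber $(Q,m)$ with $Q\in X_0$ and $m\in Y_0$ is covered by no coclique, since it would need $Q=P_i$ or $m=\ell_i$. So $(X_0,Y_0)$ is incidence-free. Since the $P_i$ need not be distinct, $|X_0|\ge q^2+q+1-t$, and likewise $|Y_0|\ge q^2+q+1-t$. Deleting elements preserves incidence-freeness, so we may pass to subsets $X\subseteq X_0$ and $Y\subseteq Y_0$ of size exactly $q^2+q+1-t$, giving the required equinumerous pair. (If equivalence is meant with ``at least'', the monotonicity just used shows the two notions agree.)

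The main obstacle is the first direction: it needs a consistent choice of one incident line for each remaining point that uses every remaining line exactly once. This is exactly the perfect matching supplied by Theorem \ref{T: improve sam}; the rest is bookkeeping. A minor point is handling repeated centers in the second direction, which the deletion argument takes care of.
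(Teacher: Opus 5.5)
Your proof is correct. The pair-to-coloring direction is the same as the paper's: take the perfect matching from Theorem \ref{T: improve sam}, use its $t$ chambers as centers, and make the same covering argument.

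The coloring-to-pair direction takes a different route. The paper first assumes the coloring is not redundant. It then shows that the centers $(P_i,\ell_i)$ can be re-chosen so that the $t$ points are pairwise distinct and the $t$ lines are pairwise distinct: if $P_1=P_2$, it moves $P_2$ along $\ell_2$ to an unused point, and if no such point exists the colour of $(P_2,\ell_2)$ was redundant. The complements then have exactly $q^2+q+1-t$ elements each.

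You skip all of this. You take the complements $X_0,Y_0$ directly, note that they have at least $q^2+q+1-t$ elements, and trim them, since subsets of an incidence-free pair stay incidence-free. This is shorter and avoids the redundancy and repositioning step. The paper's redundant case in fact implicitly needs the same trimming anyway.

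What the paper's route adds is structural information. It shows that a non-redundant chamber-coclique coloring has centers forming a matching of $t$ distinct points with $t$ distinct lines. These are exactly the objects produced in the other direction, so the two constructions become mutually inverse rather than merely existence-equivalent.
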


We apply the following lemma from the thesis of Mussche.

\begin{lemma} [\cite{Mussche_PhD}]
    For every $t$-coloring of $\Gamma_q$ that contains a triangular coclique, there exists a coloring with at most $t$ colors that contains no triangular coclique.
\end{lemma}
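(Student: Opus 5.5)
The plan is to remove triangular color classes one at a time, each time replacing the current coloring by one with no more colors and strictly fewer triangular classes. First I would normalize the coloring. By the classification of maximal cocliques recalled above, every color class lies in a chamber coclique or in a triangular coclique. A class lying in a chamber coclique can be enlarged to the full chamber coclique, deleting the added chambers from their old classes. This does not increase the number of colors, so I may assume every class is either a full chamber coclique or a triangular class of at most three chambers whose points are non-collinear.

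Next I would pin down the geometry of a triangular class $T=\{(P_1,\ell_1),(P_2,\ell_2),(P_3,\ell_3)\}$. Pairwise non-adjacency means that for each $i\neq j$ we have $P_i\in\ell_j$ or $P_j\in\ell_i$. Since $P_1,P_2,P_3$ are not collinear, a line $\ell_i$ contains at most one of the other two points. A short case check then shows the incidences are \emph{cyclic}: after relabelling, $P_2\in\ell_1$, $P_3\in\ell_2$ and $P_1\in\ell_3$, so $\ell_1=P_1P_2$, $\ell_2=P_2P_3$ and $\ell_3=P_3P_1$. Consequently any two chambers of $T$ lie in a common chamber coclique. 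For example, $(P_1,\ell_1)$ and $(P_3,\ell_3)$ both lie in the chamber coclique with centre $(P_1,\ell_3)$, since $(P_1,\ell_1)$ has point $P_1$ and $(P_3,\ell_3)$ has line $\ell_3$.

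The key step is to absorb $T$ using a second class. Let $C$ be any other class, say a chamber coclique with centre $(Q,m)$, and look for two chamber cocliques whose union covers $C\cup T$. One should be centred at $(Q,m')$, covering all chambers through $Q$ and all chambers on some line $m'\ni Q$. The other should be centred at $(Q',m)$, covering all chambers on $m$ and all chambers through some point $Q'\in m$. Together they cover $C$. The three chambers of $T$ must then be covered by a free choice of the line $m'$ through $Q$ and the point $Q'$ on $m$. Each chamber $(P_i,\ell_i)$ is covered if $Q\in\ell_i$ with $m'=\ell_i$, or if $P_i\in m$ with $Q'=P_i$, or if $P_i=Q$, or if $\ell_i=m$. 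I would therefore choose $C$ cleverly among the other classes so that its centre is incident with enough of the $P_i$ and $\ell_i$. Natural candidates are the class containing the chamber $(P_2,\ell_1)$, or one containing a chamber $(P_i,\ell_j)$ with $P_i\in\ell_j$, if that class is a chamber coclique. After the exchange the chambers of $C\cup T$ are recoloured with two colors instead of two, and duplicates are deleted from other classes. The number of triangular classes drops, and iteration finishes the proof.

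The main obstacle is guaranteeing a suitable partner $C$. It may be that the relevant chambers $(P_i,\ell_j)$ lie in other triangular classes, or that no single chamber coclique meets the geometry of $T$ well enough. For that case I would try two things. The first is to merge two triangular classes $T,T'$ directly: six chambers in two classes should be coverable by two chamber cocliques after using the cyclic structure. The second, failing that, is a counting fallback. If the number of triangular classes is large, then $t$ exceeds $q^2+1$, and one can simply replace the whole coloring by the known $(q^2+1)$-coloring from \cite{Mussche_PhD}, which uses only chamber cocliques (for instance, all chambers through points of a fixed line together with the pencil structure).
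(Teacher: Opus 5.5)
\textbf{There is a genuine gap: the absorption step can never succeed in the case where it is needed, and neither fallback covers that case.}

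\textbf{The absorption step.} In your normalized setting the classes are disjoint and $C$ is the full chamber coclique with centre $(Q,m)$. So no chamber of $T$ has point $Q$ or line $m$, and the last two options in your covering list never occur. The coclique centred at $(Q,m')$ then contains a chamber of $T$ only if $m'=\ell_i$, and the one centred at $(Q',m)$ only if $Q'=P_i$. Since $P_1,P_2,P_3$ are distinct and $\ell_1,\ell_2,\ell_3$ are distinct, the two new cocliques contain at most two of the three chambers of $T$, whatever $C$, $m'$ and $Q'$ are.

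Your suggested partners are not even chamber classes. A full chamber coclique containing $(P_2,\ell_1)$ has point $P_2$ or line $\ell_1$, so it contains $(P_2,\ell_2)$ or $(P_1,\ell_1)$, which lie in $T$.

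You should pass to covers by maximal cocliques anyway, since in a partition you in general cannot enlarge all chamber classes to full cocliques at once: enlarging one deletes chambers from others. Even then, the exchange works only when some chamber of $T$ already lies in $C$. In that case it is superfluous, because $T$ can simply be replaced by the chamber coclique through its other two chambers. So the whole difficulty is the case where no chamber class uses any $P_i$ or $\ell_i$. There the chambers of $T$ and the chambers $(P_2,\ell_1),(P_3,\ell_2),(P_1,\ell_3)$ are covered only by triangular classes, and for this case you offer only the fallbacks.

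\textbf{The fallbacks.} Merging two triangular classes is impossible in general. The coclique centred at $(Z,z)$ contains exactly the chambers with point $Z$ or line $z$. So if the six chambers have six distinct points and six distinct lines, two cocliques cover at most four of them.

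Even the partner forced in the hard case fails. It has the form $T'=\{(P_1,\ell_3),(A,AB),(B,BP_1)\}$; take $A\in\ell_3\setminus\{P_1,P_3\}$ and $B\notin\ell_1\cup\ell_3$. The points and lines of $(P_2,\ell_2)$, $(A,AB)$, $(B,BP_1)$ are distinct and occur in no other of the six chambers. So each of these three chambers needs its own one of the two centre points and two centre lines. The single remaining point or line cannot cover all of $(P_1,\ell_1)$, $(P_3,\ell_3)$, $(P_1,\ell_3)$.

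The counting fallback is not quantified. The inequality $(2q+1)s+3u\ge(q^2+q+1)(q+1)$ forces $t>q^2+1$ only once $u>q^2/2$. Nothing in the proposal shows that configurations defeating your local moves contain that many triangular classes.

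\textbf{What is missing} is a global argument. One that works is the following.
\begin{enumerate}
\item Perform the easy replacement as long as possible.
\item Let $X$ and $Y$ be the points and lines not used as centres of the $s$ chamber classes, let $B$ be the graph of incidences between them, and let $u=t-s$.
\item Every chamber in $B$ lies in a triangular class, and every remaining triangular class has all three chambers in $B$. Hence $|E(B)|\le 3u$. Also every non-isolated point of $B$ has degree at least $2$, since it is some $P_i$ lying on two of $\ell_1,\ell_2,\ell_3\in Y$.
\item Let $n$ be the number of non-isolated points. If $n\le u$, delete them all. Otherwise delete the $u$ of largest degree, and then the lines of $B$ through the remaining $n-u$ of them. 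These remaining points have total degree at most $(n-u)|E(B)|/n\le |E(B)|-2u\le u$, because $|E(B)|\ge 2n$. So at most $u$ lines are deleted.
\end{enumerate}
This leaves an incidence-free pair with both parts of size at least $q^2+q+1-t$. Trimming it to an equinumerous pair and applying Theorem~\ref{T: connect the dots} gives a coloring by $t$ chamber cocliques.
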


This leads to the following corollary.

\begin{cor}
    Finding an optimal coloring of the Kneser graph on chambers of a projective plane is equivalent to finding a maximum, equinumerous, incidence-free pair $(X,Y)$ of points and lines in the plane.
\end{cor}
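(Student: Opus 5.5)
The plan is to combine Theorem~\ref{T: connect the dots} with Mussche's lemma. The main work is showing that an optimal coloring of $\Gamma_q$ can be assumed to use only chamber cocliques. Once that holds, the optimization over colorings and the optimization over equinumerous incidence-free pairs are related by the dictionary $t \leftrightarrow q^2+q+1-t$.

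\emph{Step 1: reduction to chamber cocliques.} Let $t=\chi(\Gamma_q)$ and fix an optimal $t$-coloring. If some color class is (contained in) a triangular coclique, Mussche's lemma yields a coloring with at most $t$ colors and no triangular coclique. By optimality it has exactly $t$ colors.

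I then check that every color class of this new coloring lies in a chamber coclique. By the classification of maximal cocliques, each color class lies in a maximal coclique, which is either a chamber coclique or a triangular coclique. The only remaining case is a class of size at most $2$ contained in a triangular coclique but not itself triangular. Such small classes are harmless:
\begin{itemize}
\item A single chamber $(P,\ell)$ lies in the chamber coclique of $(P,\ell)$.
\item For two non-adjacent chambers $(P_1,\ell_1),(P_2,\ell_2)$, non-adjacency gives, say, $P_1\in\ell_2$. Then $(P_1,\ell_2)$ is a chamber, and both chambers lie in its chamber coclique: $(P_1,\ell_1)$ shares the point $P_1$ and $(P_2,\ell_2)$ shares the line $\ell_2$.
\end{itemize}
Hence there is an optimal coloring all of whose classes are contained in chamber cocliques. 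This is the sense of ``uses only chamber cocliques'' in Theorem~\ref{T: connect the dots}. If that theorem requires classes to be whole chamber cocliques, I would pass to a covering and reassign each chamber to one of the cocliques containing it, which does not change the number of colors.

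\emph{Step 2: apply the equivalence.} Let $\overline{\alpha}$ be the maximum size of an equinumerous incidence-free pair.
\begin{itemize}
\item By Step 1 and Theorem~\ref{T: connect the dots}, an optimal coloring yields a pair of size $q^2+q+1-\chi(\Gamma_q)$, so $\overline{\alpha}\ge q^2+q+1-\chi(\Gamma_q)$.
\item Conversely, a maximum pair yields, again by Theorem~\ref{T: connect the dots}, a coloring with $q^2+q+1-\overline{\alpha}$ colors, so $\chi(\Gamma_q)\le q^2+q+1-\overline{\alpha}$.
\end{itemize}
Therefore $\chi(\Gamma_q)=q^2+q+1-\overline{\alpha}$. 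Moreover, the constructions in Theorem~\ref{T: connect the dots} turn optimal colorings into maximum pairs and vice versa, which gives the claimed equivalence.

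The only step needing care is the bookkeeping in Step 1, namely color classes that are proper subsets of triangular cocliques. The argument above handles it directly.
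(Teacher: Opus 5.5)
Your proposal is correct and follows the paper's route: it combines Mussche's lemma with Theorem~\ref{T: connect the dots} to get $\chi(\Gamma_q)=q^2+q+1-\overline{\alpha}_q$, with the two constructions turning optimal colorings into maximum pairs and back. The paper just says the corollary follows, so your explicit handling of color classes of size at most two that lie inside a triangular coclique is a correct and useful filling-in of what the paper leaves implicit.
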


The size of $X$ in a largest possible, equinumerous, incidence-free pair $(X,Y)$ as in the theorem above is also called the incidence-free number of the incidence graph of the projective plane. Inspired by the notation of \cite{isoperimetric}, we denote this number as $\overline{\alpha}_q$. 
In \cite{Incidence-free_Sets_de_winter} a similar (but more general) number was studied.
In fact, the authors of \cite{Incidence-free_Sets_de_winter} reproved a result from Haemers, namely $\sqrt{q}(q-\sqrt{q}+1)\geq\overline{\alpha}_q$ with equality if and only if $q$ is a power of $4$. 
Theorem \ref{T: connect the dots} shows that this is equivalent to $(q+1)(q+1-\sqrt{q})\leq\chi(\Gamma_q)$ with equality if and only if $q$ is  a power of $4$.  This is the same lower bound that was also proven in \cite{Mussche_PhD}.

If $q=p^2$ is not a power of $4$, the currently largest known construction for $\overline{\alpha}_{p^2}$ is due to \cite{Incidence-free_Sets_de_winter} and gives $\frac{1}{2}p^3\leq \overline{\alpha}_{p^2}$. We improve this as follows.

\begin{thm}\label{T: improved unital construction2}
In $\PG(2,q^2)$,  there exists an incidence-free pair of size
\[
\frac{1}{2} q^3 + \frac{3}{4}q^2- \mathcal{O}(q).
\]

More precisely, let $r = \lfloor \frac{q-2}{2} \rfloor$, then we obtain an incidence-free pair of size at least
$\lfloor \frac{q^3 + q^2-qr}{2(q+1)}\rfloor (q+1)+ qr$.
\end{thm}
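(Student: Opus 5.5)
The plan is to build the pair from the Hermitian unital $\mathcal U$ of $\PG(2,q^2)$ and its unitary polarity $\perp$. Recall the relevant facts. $|\mathcal U|=q^3+1$, and every line meets $\mathcal U$ in $1$ or $q+1$ points. For $R\in\mathcal U$, $R^\perp$ is the tangent line at $R$. For a point $Q\notin\mathcal U$, $Q^\perp$ is a secant line. Finally, $Q$ lies on the tangent at $R\in\mathcal U$ if and only if $R\in Q^\perp$.

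\textbf{Baseline pair.} Fix $A\subseteq\mathcal U$, put $A\subseteq X$, and put the tangent lines $R^\perp$ for $R\in\mathcal U\setminus A$ into $Y$. A tangent line at $R$ meets $\mathcal U$ only in $R$, so this pair is incidence-free. With $|A|\approx (q^3+1)/2$ it recovers a bound of the order $\frac12 q^3$ from \cite{Incidence-free_Sets_de_winter}.

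\textbf{Enlarging the pair.} Two kinds of objects can be added.
\begin{enumerate}[(a)]
\item A point $Q\notin\mathcal U$ can be added to $X$ provided the chord $Q^\perp\cap\mathcal U$ lies entirely in $A$. Then none of the chosen tangents passes through $Q$.
\item A secant line $m$ can be added to $Y$ provided $m\cap\mathcal U\subseteq \mathcal U\setminus A$ and $m$ contains no point added in (a).
\end{enumerate}
The counting in the statement (blocks of size $q+1$ plus a correction $qr$) suggests the following choice. Take $A$ to be a union of chords, about $\frac{q^3+q^2-qr}{2(q+1)}$ of them contributing $(q+1)$ points each. Taking the poles of chords inside $A$ and the chords inside $\mathcal U\setminus A$ should then produce the extra $\approx \frac34 q^2$ elements on each side.

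Concretely, I would try a pencil structure. Fix $P\in\mathcal U$. The $q^2$ secants through $P$ partition $\mathcal U\setminus\{P\}$ into $q^2$ blocks of size $q$. Their poles all lie on the tangent $P^\perp$, and are distinct points off $\mathcal U$. Distributing these blocks between $A$ and $\mathcal U\setminus A$ lets me control which poles qualify for (a). The remaining freedom, namely about $r=\lfloor (q-2)/2\rfloor$ further chords whose $q$ non-$P$ points are reassigned, is used to make $|X|=|Y|$ up to the floor term. Since a union of whole blocks has size divisible by the block size, exact equality is reached by adding or removing single points of $A$ together with their tangent lines, which keeps the pair incidence-free.

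\textbf{Verification and main obstacle.} The remaining work is to count $|X|$ and $|Y|$ and check the four incidence conditions:
\begin{itemize}
\item unital points of $X$ against tangents in $Y$;
\item unital points of $X$ against secants in $Y$;
\item added poles in $X$ against tangents in $Y$;
\item added poles in $X$ against added secants in $Y$.
\end{itemize}
The first three follow from the polarity facts above. The main obstacle is the fourth: added poles all lie on $P^\perp$, while added secants must avoid them. The secants through $P$ meet $P^\perp$ only in $P$, which is harmless. The delicate part is the extra $r$ chords not through $P$, each of which meets $P^\perp$ in one point. I would choose these so that their intersection points with $P^\perp$ are poles of chords lying in $\mathcal U\setminus A$, hence not added to $X$. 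This is feasible only for about half of the remaining pencil, which is exactly what produces the bound $r\le (q-2)/2$. Optimising the split then gives the stated value $\lfloor \frac{q^3+q^2-qr}{2(q+1)}\rfloor(q+1)+qr=\frac12q^3+\frac34q^2-\mathcal O(q)$.
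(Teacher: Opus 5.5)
Your setup (Hermitian unital, tangents at the deleted points, the pencil of secants through a unital point $P$, and their poles on $P^\perp$) is the right one. It is the skeleton of the paper's proof: there $P=P_\infty$ in the model $N(x)=\operatorname{Tr}(y)$, and the pencil is the set of vertical lines $x=a$. Their poles are the points $(1,a^q,0)$ at infinity, where all tangents at the points of $x=a$ meet.

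\textbf{The gap: the second-order term.} The step that should produce the extra $\frac14q^2$ does not work as written. Suppose $d$ blocks through $P$ are deleted and $s$ extra secants not through $P$ are added. Then $|X|=(q+1)(q^2-d)$ and $|Y|=(q+1)d+s$, so the balanced size is $\frac12(q+1)q^2+\frac s2$.
\begin{itemize}
\item ``About $r$ further chords'' therefore only gives $\frac12q^3+\frac12q^2+O(q)$.
\item Reassigning points between $A$ and $\mathcal U\setminus A$ only moves the balance point; it gains nothing.
\item To get the extra $qr\approx\frac12 q^2$ you need $s=rq$ secants not through $P$. Each must have all $q+1$ of its unital points on deleted blocks (these points lie on $q+1$ different lines through $P$). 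Each must also meet $P^\perp$ in the pole of a deleted block.
\item Your plan only addresses the second condition. Nothing in it guarantees that $\Theta(q^2)$ secants can have their unital points confined to the roughly $q^2/2$ deleted blocks.
\end{itemize}

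\textbf{The missing idea: the grid structure.} For $c\in\F_q^*$, the $q$ lines $y=b$ with $\operatorname{Tr}(b)=c$ all pass through $(1,0,0)$, which is the pole of $x=0$. They cut $\mathcal U$ in the $q$ rows of $G_c=\{(x,y):N(x)=c=\operatorname{Tr}(y)\}$. The points of $G_c$ lie on only the $q+1$ vertical lines $x=a$ with $N(a)=c$. So deleting the $1+r(q+1)$ blocks indexed by $A_0=\{0\}\cup N^{-1}(C)$, with $|C|=r$, buys $rq$ secants at once. The restriction on $r$ comes from requiring $1+r(q+1)\le\lfloor\frac{q^3+q^2-qr}{2(q+1)}\rfloor$, i.e.\ that these prescribed blocks fit among the deleted ones. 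It does not come from a condition on intersection points with $P^\perp$.

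\textbf{Two smaller points.}
\begin{itemize}
\item Your rules (a) and (b) clash at $P$. Every $Q\in P^\perp\setminus\{P\}$ has $P\in Q^\perp$, so (a) forces $P\in A$. But (b), applied to any secant through $P$, forces $P\notin A$. You must make $P$ neutral, with $P\notin X$ and $P^\perp\notin Y$, as the paper does with $P_\infty$ and the line at infinity.
\item Balancing by moving a single point $R$ between $A$ and $\mathcal U\setminus A$, together with its tangent, can create incidences. For instance, $R^\perp$ passes through the pole of the block $PR$, which lies in $X$ if that block is kept. The safe way is to discard surplus elements from the larger of $X$ and $Y$.
\end{itemize}
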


Below we list small values of $q$ for which $\chi(\Gamma_q)$ is now known. 
For $q=2,3$ and  powers of $4$ this was already determined in \cite{Mussche_PhD}, all other values utilize Theorem \ref{T: connect the dots} and \cite{isoperimetric, Ure_PhD}. 
In particular, Theorem 1.1 of \cite{isoperimetric}, in conjunction with Theorem \ref{T: connect the dots}, significantly improves the general upper bound $q^2+1$ of \cite{Mussche_PhD}. We remark that the more general bounds on $q$ in the table below require the plane to be Desarguesian.

\begin{table}[h]
    \centering
     \renewcommand{\arraystretch}{1.3}
    \begin{tabular}{|c|c|c|}
     \hline
       $q$ & $\chi(\Gamma_q)$ & Reference \\
     \hline\hline
    $2$ & $5$ & \cite{Mussche_PhD}  \\
    \hline
    $3$ & $10$ & \cite{Mussche_PhD} \\
    \hline
    $4$ & $15$ & \cite{Mussche_PhD}  \\
    \hline
    $5$ & $24$ & \cite{Ure_PhD}  \\
    \hline
    $7$ & $44$ & \cite{Ure_PhD} \\
    \hline
    $8$ & $57$ & \cite{isoperimetric} \\
    \hline
    $9$ & $72$ & \cite{isoperimetric} \\
    \hline
    $11$ & $122$ &  \cite{isoperimetric}\\
    \hline
    $13$ & $147$ & \cite{isoperimetric} \\
    \hline
    $p^2$ & $\leq p^4+p^2+1-(p^3/2+3p^2/4-\mathcal{O}(p))$ & \ref{T: improved unital construction2} \\
     \hline
    $q$ & $\leq q^2+q+1-(\frac{1}{2}q^{3/2} - \mathcal{O}(q^{5/4+\epsilon}))$ \text{\ for any $\epsilon>0$} & \cite{isoperimetric} \\
    \hline
    \end{tabular}
    \caption{Bounds on $\chi(\Gamma_q)$}
    \label{tab:placeholder}
\end{table}

Table \ref{tab:placeholder} uses $\overline{\alpha}_5=7$, so there is an incidence-free pair $(X,Y)$ of points and lines with $|X|=|Y|=7$ in a projective plane of order $5$.
When searching the literature, we could not find an explicit construction of such a set, we give one in Section \ref{section: constructions for incidence free number}.

\section{Proof of Theorem \ref{T: improve sam}}

Consider a bipartite graph with bipartite sets $A$ and $B$. An $A$-perfect matching is a matching of disjoint edges that covers every vertex in $A$. Assume that $W\subseteq A$ and denote by $N(W)$ the neighborhood of $W$ in $B$, i.e. the set of all vertices in $B$ that are adjacent to a vertex in $W$. If we have $|W|\leq|N(W)|$ for any such $W$, then Hall's marriage theorem implies the existence of an $A$-perfect matching. In particular, Hall's marriage theorem implies that any regular bipartite graph has a perfect matching.

The idea of our proof is as follows: 
First we fix a bijection $\Phi$ between the removed points $X$ and the removed blocks $Y$. 
For every pair $(x,\Phi(x))$ consider their neighbors.
In this neighborhood, we can find a (local) perfect matching.
These (local) perfect matchings can be used to construct a weight function for the edges, such that the assigned weights give a fractional perfect matching. This implies the existence of a perfect matching.

\begin{proof}[Proof of Theorem \ref{T: improve sam}]
  Let $D=(\mathcal{P},\mathcal{B})$ be a symmetric $(v,k,\lambda)$-design and let $(X,Y)$ be an equinumerous incidence-free pair, with $X\subseteq \mathcal{P}$ and $Y\subseteq \mathcal{B}$.

    Let $\Phi:X\rightarrow Y$ be a bijection. For each $x\in X$ define $N(x):= \{ C\in \mathcal{B} \mid x\in C \}$. As $(X,Y)$ is incidence-free, we have $N(x)\cap Y=\emptyset$.
    Furthermore, define $N(\Phi(x))=\{ p\in \mathcal{P} \mid p\in \Phi(x) \}$. As $\Phi(x)\in Y$ and $(X,Y)$ is incidence free, we have $N(\Phi(x))\cap X=\emptyset$.
    For $x\in X$ consider the bipartite graph $H_x$ with vertices $N(x)\cup N(\Phi(x))$, two vertices are adjacent if they are incident in $D$.
    If $C\in N(x)$, then $C\neq \Phi(x)$, as $N(X)\cap Y=\emptyset$, hence $|C\cap \Phi(x)|=\lambda$.
    For $p\in \Phi(x)$, there are $\lambda$ blocks that contain $p$ and $x$, all these blocks are in $N(x)$. 
    Hence, $H_x$ is a $\lambda$-regular, bipartite graph. 
    By Hall's marriage theorem, $H_x$ has a perfect matching, we denote it by $M_x$.

    Let $G$ be the point-block incidence graph of the design $D$ induced on the points $\mathcal{P}\setminus X$ and the blocks $\mathcal{B}\setminus Y$.
    As $N(x)\cap Y=\emptyset$ and $N(\Phi(x))\cap X=\emptyset$ for $x\in X$, the graph $H_x$ is a subgraph of $G$. In particular, all edges of the matching $M_x$ are also edges in $G$. 

    We define a weight $w(e)$ on each edge $e$ of $G$ via
    $$ w(e):=\frac{1+|\{ x\in X \mid e\in M_x \}|}{k}. $$
    The weight $w(v)$ of a vertex $v$ in $G$ is
    $$ w(v):=\sum\limits_{v\in e}w(e). $$
    Next up, we show that every vertex of $G$ in $\mathcal{P}\setminus X$ has weight $1$, this yields a $(\mathcal{P}\setminus X)$-perfect fractional matching.

    Let  $p\in \mathcal{P}\setminus X$. Define by $|N_Y(p)|$ the number of blocks incident with $p$ that are in $Y$. Since $p$ lies on $k$ blocks in the design $D$, it has degree $k-|N_Y(p)|$ in $G$.
    For each block in $Y$ that is incident with $p$, the bijection $\Phi$ gives us a unique point $x$ in $X$. By construction, we have that $p$ is a vertex of $H_x$. Hence there is precisely one edge in $M_x$ incident with $p$. As this is true for each block in $Y$ which is incident with $p$, the extra contribution summed up over all edges is $|N_Y(p)|$ and the total weight is $w(p)=\frac{k-|N_Y(p)|+|N_Y(p)|}{k}=1$.

The fact that a $(\mathcal{P}\setminus X)$-perfect fractional matching implies the existence of a  $(\mathcal{P}\setminus X)$-perfect matching is well known, we provide the short argument: 
By a completely symmetric argument, we obtain that every vertex $C$ of $G$ that is in $\mathcal{B}\setminus Y$ also has weight $1$.
Let $S\subseteq \mathcal{P}\setminus X$.
Denote by $N(S)$ the set of all blocks in $\mathcal{B}\setminus Y$ that are incident with a point in $S$.
Then
\begin{align*}
    |S|&=\sum\limits_{p\in S}1=\sum\limits_{p\in S} 
    \sum\limits_{\substack{C\in \mathcal{B}\setminus Y \\ p\in C}} w(\{p,C\})=   
    \sum\limits_{C\in N(S)} 
    \sum\limits_{\substack{p\in \mathcal{P}\setminus X \\ p\in S\cap C}} w(\{p,C\})\\
    &\leq  \sum\limits_{C\in N(S)} \sum\limits_{\substack{p\in \mathcal{P}\setminus X \\ p\in C}}w(\{p,C\})=\sum\limits_{C\in N(S)}1=|N(S)|.  
\end{align*}
    Hence $|S|\leq |N(S)|$ and Hall's marriage theorem yields that $G$ has a $(\mathcal{P}\setminus X)$-perfect matching.
    As $|\mathcal{P}\setminus X|=|\mathcal{B}\setminus Y|$, this is a perfect matching of $G$.
\end{proof}

\section{Proof of Theorem \ref{T: connect the dots}}
\label{Section: 3}

\begin{proof}[Proof of Theorem \ref{T: connect the dots}]
First, we show how we find an incidence-free, equinumerous pair $(X,Y)$ of points and lines, starting from a coloring that uses only chamber cocliques. 
We call a coloring redundant if some color can be removed entirely, with the vertices of that color recolored using only the remaining colors, so that the resulting coloring is still valid.
If our coloring is redundant, this translates to finding an equinumerous incidence-free pair with fewer elements. So we can assume that our $t$-coloring is not redundant.

Assume that we have such a $t$-coloring. Then, this coloring can be described as follows: There are chambers $(P_1,\ell_1),\ldots,(P_t,\ell_t)$, such that for every chamber $(P,\ell)$ there is an $i$ such that $P=P_i$ or $\ell=\ell_i$. Now, assume that the  points $P_i$ and lines $\ell_i$ are not all pairwise distinct. Without loss of generality let $P_1=P_2$ and $\ell_1\neq \ell_2$. There are $2q+1$ chambers in the coclique based on $(P_1,\ell_1)$ and there are only $q$ other chambers (namely $(P_2^i,\ell_2)$ for $P_2^i\neq P_2$) in the coclique based on $(P_2,\ell_2)$. These $q$ chambers $(P_2^i,\ell_2)$ are in every coclique based on a chamber $(P_2',\ell_2)$ with $P_2'\in\ell_2$. So if we can change $P_2$ to a point on $\ell_2$ and not in $P_1,P_3,\ldots,P_t$, then our list of chambers has the desired property. If we cannot make such a change, that is if every point of $\ell_2$ occurs in the list $P_1,P_3,\ldots,P_t$, then the $q$ chambers $(P_2^i,\ell_2)$ of the coclique based on $(P_2,\ell_2)$ that are not in the coclique based on $(P_1,\ell_1)$, are all contained in the cocliques based on $(P_3,\ell_3),\ldots,(P_t,\ell_t)$. In this case our coloring was redundant, which is a contradiction.

Denote by $\mathcal{P}$ the set of points in the plane and let $X:=\mathcal{P}\setminus\{ P_1,\ldots ,P_t\}$. Similarly denote by $\mathcal{L}$  the set of lines in the plane and let $Y:=\mathcal{L}\setminus \{ \ell_1,\ldots ,\ell_t\}$. Now $(X,Y)$ is an equinumerous pair with $|X|=|Y|=q^2+q+1-t$. Assume that $(X,Y)$ is not incidence-free. Then there is a point $P\notin \{ P_1,\ldots ,P_t\}$ and a line $\ell\notin \{ \ell_1,\ldots ,\ell_t\}$, such that $(P, \ell)$ is a chamber. Then there is no $i$, such that $P=P_i$ or $\ell=\ell_i$, which is a contradiction.

Now, we show how to find a coloring, starting from an incidence-free, equinumerous pair $(X,Y)$ of points and lines.

Consider the incidence graph $G$ of the projective plane that we obtain if we remove $X\cup Y$.
By Theorem \ref{T: improve sam}, we know that we can find a perfect matching of this graph. 
Every edge of the perfect matching connects a point to an incident line. So, we can understand the matching as a list of chambers $(P_1,\ell_1),\ldots,(P_t,\ell_t)$.
We claim that for every chamber $(P,\ell)$ there exists an $i$, such that $P=P_i$ or $\ell=\ell_i$. If there is no $P_i$ with $P_i=P$, then we must have $P\in X$. But since $\ell$ is incident with $P$ and $(X,Y)$ is incidence-free, then $\ell\notin Y$. Hence, $\ell$ is a vertex in $G$, so there is an $i$ with $\ell=\ell_i$.

Now, every vertex of $\Gamma_q$ is in at least one of the cocliques determined by the chambers $(P_1,\ell_1),\ldots,(P_t,\ell_t)$. This gives a coloring with $t$ colors.
\end{proof}

\section{Geometric constructions of incidence-free pairs}
\label{section: constructions for incidence free number}

First, we geometrically describe an incidence-free pair $(X,Y)$ of points and lines with $|X|=|Y|=7$ for $q=5$, which is best possible via \cite{isoperimetric}. Our construction resembles a grid.

Fix a point $P$ and consider the lines $\ell_1,\ldots,\ell_{6}$ incident with $P$.
Next, fix a line $g$ that is not incident with $P$.
Denote by $Q$ the point $g\cap \ell_1$. Let $g_1, g_2$ be distinct lines that are incident with $Q$, but not $P$, such that $g\neq g_1,g_2$.
Let $g_3$ be the line that is incident with $g_1\cap \ell_2$ and $g_2\cap \ell_3$.
We define $Y:=\{ \ell_4,\ell_5,\ell_6,g,g_1,g_2,g_3 \}$.

Let $X$ be the set of points on $\ell_1,\ell_2,\ell_3$ that are not incident with any of the lines in $Y$.
The lines $\ell_4,\ell_5,\ell_6$ intersect $\ell_1,\ell_2,\ell_3$ only in $P$.
Furthermore, the line $g$ intersects $\ell_1,\ell_2,\ell_3$ in three distinct points.
So, there are $q-1=4$ points left on each of the three lines.

The lines $g_1,g_2$ intersect $\ell_1$ in the same point as $g$. However, $g_3$ intersects $\ell_1$ in a different point. Hence, there are $3$ points left on $\ell_1$.

The lines $g_1,g_2$ intersect $\ell_2,\ell_3$ in points not on $g$.
However, $g_3$ intersects $\ell_2,\ell_3$ in $g_1\cap \ell_2$ and $g_2\cap \ell_3$, so it removes no additional points. Hence, there are $2$ points left on $\ell_2,\ell_3$.
In total, we have $|X|=3+2+2=7$.\\

Next, we provide a construction for a large incidence-free pair of $\PG(2,q^2)$. For this construction we need unitals. A unital of $\PG(2,q^2)$ is a set of $q^3+1$ points such that every line of the plane contains exactly one, or $q+1$ points of the unital. Roughly speaking, the idea in \cite{Incidence-free_Sets_de_winter} is to take half of the points of the unital and the tangent lines for the other half. However, the authors of \cite{Incidence-free_Sets_de_winter} did already remark that choosing the points more carefully can improve the bound.

\begin{proof}[Proof of Theorem \ref{T: improved unital construction2}]

Let $P_{\infty}$ be the ideal point of the Hermitian unital $U$ in $\PG(2, q^2)$ and let us write the affine part of $U$ in the norm-trace form, i.e.
\[
U \setminus \{P_{\infty}\} = \{(x, y): N(x) = \operatorname{Tr}(y)\}, 
\]
where $N(x) = x^{q+1}$ and $\operatorname{Tr}(y) = y^q+y$, for details of this representation we refer to Section 12.3 of \cite{hermitian_unital_norm-trace}. Hence the tangent at $P_{\infty}$ is the line at infinity. First of all, note that for each $c \in \F_q^*$ the set $G_c=\{(x,y):N(x)=c,\ \operatorname{Tr}(y)=c\}$ is contained in $U$. The equation $N(x)=c$ has $q+1$ solutions, while $\operatorname{Tr}(y)=c$ has $q$ solutions; so $G_c$ is a $(q+1)$ by $q$ grid of unital points. Choose a set $C \subset \F_q^*$ of size $r = \left\lfloor\frac{q-2} {2}\right\rfloor$ and let $A_0 = \{0\} \cup N^{-1}(C)$, that is \[A_0 := \{0\} \cup \{ a \in \F_{q^2}: N(a) \in C\},\] hence $|A_0| = 1+r(q+1)$. Now choose a set $A \subset \F_{q^2}$ of size $\left\lfloor\frac{q^3+q^2-rq}{2(q+1)}\right\rfloor$ which contains $A_0$. 

Now we construct our incidence-free structure $(X, Y)$. The point set $X$ consists of the affine points $(x, y)$ of $U$ such that $x \not \in A$, together with certain points on the line at infinity. We add the points on the line at infinity except $P_\infty$ and except the points in which the tangents at the deleted affine unital points (with $x\in A$) meet the line at infinity. The line set $Y$ consists of all tangents at the deleted affine unital points, the vertical lines $x=a$, where $a \in A$ and the horizontal lines with $y=b$ where $\operatorname{Tr}(b) \in C$.

The tangent lines in $Y$ are disjoint from $X$, since each of them meets $U$ only in its deleted affine point and meets the line at infinity in one of the excluded points. The chosen vertical and horizontal lines are also disjoint from $X$: the vertical lines contain - apart from $P_\infty$ - only deleted affine unital points, while if $\operatorname{Tr}(b)=c\in C$, then the affine unital points on the horizontal line $Y=bZ$ have $N(x)=c$, hence $x\in A_0\subseteq A$, and the point at infinity of these horizontal lines is excluded because $0\in A$.

Note that $Y$ contains $|A|q$ tangent lines (of $U$), $|A|$ vertical lines and $rq$ horizontal lines; hence \[|Y| = |A|(q+1)+rq. \] Also, $X$ contains $q^3-|A|q$ affine points from the unital $U$ and $q^2-|A|$ points from the line at infinity; hence \[|X|= q^3+ q^2-|A|q-|A|.\] By our choice of the size of $A$, we have $|Y| \le |X|$ and so by removing some extra points from $X$ if needed, we get an equinumerous incidence-free structure of size $\left\lfloor \frac{q^3+q^2-qr}{2(q+1)}\right\rfloor (q+1)+qr$ and hence the theorem follows.
\end{proof}

\subsection*{AI tool disclosure}
The proof of Theorem \ref{T: improve sam} is based on a suggestion from ChatGPT Pro. The final arguments in this paper are human-generated.

\subsection*{Acknowledgements}
The first author would like to acknowledge the support of the Universiteit Gent, where part of this work was done while the first author was a visitor. 
Theorem \ref{T: improved unital construction2} was conceptualized during the conference Combinatorics 2026 in Naples, the authors would like to thank the organizers for a wonderful conference.

\bibliographystyle{plainurl}
\bibliography{bib_CN.bib}

\end{document}